\newtheorem{proposition}{Proposition}[section]
\newtheorem{theorem}{Theorem}[section]
\newtheorem{lemma}{Lemma}[section]
\title{Line Graph Characterization of Cyclic Subgroup Graph}	
\author{Siddharth Malviy, Vipul Kakkar, Swapnil Srivastava}
\date{}
\begin{document}
	\maketitle
	\noindent \textbf{{Abstract.}}
	The cyclic subgroup graph ${\Gamma(G)}$ of a group $G$ is the simple undirected graph with cyclic subgroups as a vertex set and two distinct vertices $H_1$ and $H_2$ are adjacent if and only if $H_1 \leq H_2$ and there does not exist any cyclic subgroup $K$ such that $H_1 < K < H_2$. In this paper, we classify all the finite groups $G$ such that $\Gamma(G)$ is the line graph of some graph. \\
    
	\noindent \textbf{{Keywords.}}  Cyclic subgroup graph, line graph \\
	
	\noindent \textbf{2020 MSC.} 05C25, 05C76\\
	
	\section{Introduction}
The study of the interaction between algebraic structures and graph theory is a fundamental research area in algebraic graph theory. Various researchers have explored different graphs associated with groups due to their significant application in different mathematical disciplines. A detailed study on different graphs defined on groups has been done by PJ Cameron \cite{1}. Several graphs have been defined in connection with groups such as commuting graph, power graph, cayley graph and many more. Among them is the subgroup graph $L(G)$ of a group $G$. A subgroup graph is a graph that shows the relationship between subgroups of a group. Let $S(G)$ denote the set of subgroups of a finite group $G$, then the subgroup graph of group $G$ is a graph whose vertices is $S(G)$ such that two vertices $H_1, H_2 \in S(G)$ are adjacent if either $H_1 \leq H_2$ or $H_2 \leq H_1$ and there is no subgroup $K \leq G$ such that $H_1 < K < H_2.$  \\
In this paper, we mainly focus on a special subgraph of $L(G)$, namely the cyclic subgroup graph $\Gamma(G)$ of $G$. Its vertex set is $C(G)$, the collection of all cyclic subgroups of $G$ and two vertices  $H_1$ and $H_2$ are adjacent if and only if $H_1 \leq H_2$ and there does not exist any cyclic subgroup $K$ such that $H_1 < K < H_2$. Marius \cite{2} studied on the number of edges of cyclic subgroup graphs of finite order groups. K. Sharma and A. Reddy \cite{3} explored the various properties of these graphs and characterized when the cyclic subgroup graph is bipartite, connected, complete and regular. I. M. Richards \cite{4} have discussed on the number of cyclic subgroups of a finite groups.\\
The line graph $L(\Gamma)$ of graph $\Gamma$ is the graph whose vertex set consists
of all edges of $\Gamma$; two vertices of $L(\Gamma)$ are adjacent if and only if they are
incident in $\Gamma$. Several studies have been focused on classifying groups whose power graph, commuting graph and other graphs are line graphs (see  \cite{5}, \cite{6} and \cite{7}). \\
In this paper, we aim to classify all finite groups $G$ such that $\Gamma$ is a line graph. This characterization helps in understanding the cyclic subgroups interaction within finite groups through a graphical representation.

\section{Preliminaries}
	The vertex set $V(\Gamma)$ and the edge set $E(\Gamma)\subseteq V(\Gamma)\times V(\Gamma)$ form an ordered pair that constitutes a graph $\Gamma$. If $\{u, v\}\in E(\Gamma)$, then two vertices, $u$ and $v$ are adjacent. If a graph has no loops or multiple edges, then it is referred to as a {simple graph}. In this study, we just take into consideration simple graphs.  A graph $\Gamma'$ such that $V(\Gamma')\subseteq V(\Gamma)$ and $E(\Gamma')\subseteq E(\Gamma)$ is called a subgraph  of a graph $\Gamma$.  
	
	Suppose that $X\subseteq V(\Gamma)$. Then the subgraph $\Gamma'$ induced by the set $X$ is a graph such that $V(\Gamma')=X$ and $u,v\in X$ are adjacent if and only if they are adjacent in $\Gamma$. A graph $\Gamma$ is considered complete if every pair of vertices is adjacent to one another.  The graph $\overline{\Gamma}$ such that $V(\Gamma)= V(\overline{\Gamma})$ and two vertices $u$ and $v$ are adjacent in $\overline{\Gamma}$ if and only if $u$ is not adjacent to $v$ in $\Gamma$ is the complement of a graph $\Gamma$. Throughout this paper, $\mathbb{Z}_n$ denotes the cyclic group of order $n$.  If a graph is a complete graph, then it is the line graph and complement of line graph of some graph. The cyclic subgroup graph $\Gamma(G)$ of a group $G$ is complete if and only if either $G$ is trivial or $G \cong \mathbb{Z}_p$, where $p$ is a prime.
	
	\noindent A characterization of line graph is described in the next lemma,  which is helpful in the sequel.
	
	\begin{lemma}{\rm \cite{line}}{\label{lg}}
		A graph $\Gamma$ is the line graph of some graph if and only if none of the nine graphs in $\mathrm{Figure \; \ref{figure 1}}$  is an induced subgraph of $\Gamma$.
	\end{lemma}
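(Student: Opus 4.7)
The statement is Beineke's classical characterisation, so my proposal follows the familiar two-direction template. The necessity direction, that no line graph contains any of the nine graphs as an induced subgraph, is the easier half. For each forbidden graph $F$ I would suppose $F \cong L(H)$ and reinterpret the vertices of $F$ as edges of $H$, with adjacency meaning "sharing an endpoint." Tracing which edges of $H$ must share which endpoints then produces a contradiction. The archetype is the claw $K_{1,3}$: a central edge of $H$ has only two endpoints, so it cannot be incident to three pairwise disjoint edges. Each of the other eight exclusions is ruled out by an analogous, slightly longer argument on the forced structure of $H$.

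For sufficiency I would invoke Krausz's equivalent characterisation: $\Gamma$ is a line graph if and only if $E(\Gamma)$ admits a partition into cliques (call them \emph{blocks}) such that every vertex lies in at most two blocks. Given such a partition one recovers a root graph $H$ by placing one vertex per block (plus a pendant at each singly-covered vertex of $\Gamma$) and joining two block-vertices whenever their blocks share a vertex of $\Gamma$. The task therefore reduces to constructing a Krausz partition of $\Gamma$ under the assumption that no forbidden subgraph appears.

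To produce the partition I would work locally. At each vertex $v$ define a relation on $N(v)$ by declaring $u \sim_v w$ when $uw \in E(\Gamma)$ and the triangle $vuw$ is forced to lie in a common block by a local forbidden-subgraph argument (phrased, in the standard treatment, via \emph{odd} and \emph{even} triangles). Absence of $K_{1,3}$ forces $N(v)$ to split into at most two cliques, which are the candidate blocks at $v$. One then checks that $\sim_v$ is an equivalence relation and that, for every edge $vu$, the block chosen at $v$ containing $u$ and the block chosen at $u$ containing $v$ are the same set of edges. Gluing these locally defined blocks yields the desired global partition.

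The main obstacle is precisely this gluing step. Claw-freeness alone does not suffice, since claw-free graphs need not be line graphs; the remaining eight exclusions are exactly the minimal obstructions to global compatibility of the locally determined partitions. The real technical content, and the hardest part of the argument, is a careful case analysis showing that any failure of compatibility around an edge $vu$ forces one of the nine graphs to appear as an induced subgraph. Once this case analysis is completed, the preceding construction goes through and the characterisation follows.
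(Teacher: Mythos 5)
The paper does not prove this lemma at all: it is Beineke's classical forbidden-subgraph characterization, imported verbatim from reference \cite{line}, so there is no in-paper argument to compare yours against. Your outline does follow the standard route to Beineke's theorem (necessity by showing each of the nine graphs fails to be a line graph, together with the hereditary property that an induced subgraph of a line graph is again a line graph; sufficiency via a Krausz clique partition assembled from local neighbourhood decompositions and odd/even triangles), and the claw argument you give is correct.

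However, as written the proposal has two genuine problems. First, the claim that ``absence of $K_{1,3}$ forces $N(v)$ to split into at most two cliques'' is false: claw-freeness only says $N(v)$ contains no independent set of size $3$, i.e.\ the complement of the induced neighbourhood is triangle-free, which does not make it bipartite. The $5$-wheel $K_1 \vee C_5$ is claw-free, yet the hub's neighbourhood is a $C_5$ and cannot be covered by two cliques --- which is precisely why the $5$-wheel is itself one of the nine forbidden graphs. So the very first step of your construction already needs several of the other exclusions, not just the claw. Second, you explicitly defer ``a careful case analysis showing that any failure of compatibility around an edge $vu$ forces one of the nine graphs to appear''; but that case analysis, together with the corrected neighbourhood decomposition, \emph{is} the theorem. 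What you have is a correct map of the proof's architecture with the two load-bearing pieces left as promissory notes, so it cannot be accepted as a proof of the lemma --- though for the purposes of this paper the honest course is simply to cite Beineke, as the authors do.
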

	\begin{figure}[H]
		\centering
		\includegraphics[scale=.7]{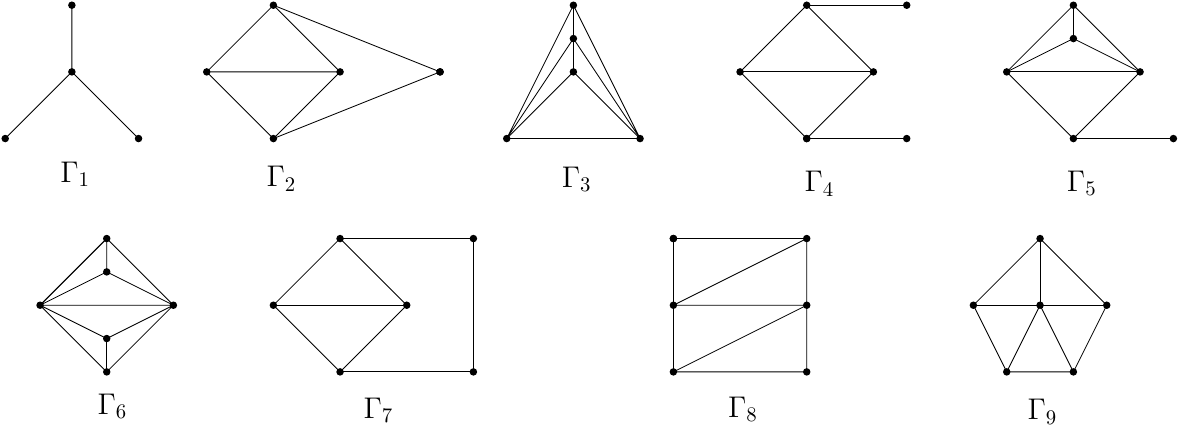}
		\caption{Forbidden induced subgraphs of line graphs.}
		\label{figure 1}
	\end{figure}
	
	\section{Line graph characterization of $\Gamma(G)$} 
	
	Throughout this section the graph $\Gamma$ or $\Gamma(G)$ will denote the line graph of a non-trivial finite group $G$. The identity of the group is denoted by $e$. All the finite groups $G$ such that $\Gamma(G)$ is a line graph of some graphs are classified in this section. 
	
\begin{theorem} \label{thm distinct}
     Let three distinct primes divide the order of the group $G$. Then the cyclic subgroup graph $\Gamma$ is not a line graph.
     \end{theorem}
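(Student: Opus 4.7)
The plan is to produce an induced claw $K_{1,3}$ in $\Gamma(G)$, since the claw is one of the nine forbidden induced subgraphs listed in Lemma \ref{lg}. Exhibiting a single forbidden subgraph suffices to prove $\Gamma(G)$ is not a line graph, and the hypothesis that three distinct primes divide $|G|$ naturally supplies three ``independent'' atoms above the identity to serve as the leaves of the claw.

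First I would invoke Cauchy's theorem. If $p,q,r$ are three distinct primes dividing $|G|$, then $G$ contains elements $x,y,z$ of orders $p,q,r$ respectively, yielding cyclic subgroups $P=\langle x\rangle$, $Q=\langle y\rangle$, $R=\langle z\rangle$ of prime orders $p,q,r$. Together with the trivial subgroup $\{e\}$, these four subgroups will be the candidate vertices for the induced claw.

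Next I would verify the adjacencies in $\Gamma(G)$ on this four-vertex set. Since $P$ has prime order, no cyclic subgroup $K$ satisfies $\{e\}<K<P$, so $\{e\}$ and $P$ are adjacent in $\Gamma(G)$; the same argument applies to $Q$ and $R$. For the non-adjacencies, note that adjacency in $\Gamma(G)$ requires a containment relation between the two subgroups. However, Lagrange's theorem forbids any containment among $P,Q,R$, as their orders are distinct primes and none divides another. Hence $P,Q,R$ are pairwise non-adjacent in $\Gamma(G)$.

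Combining these observations, the induced subgraph of $\Gamma(G)$ on $\{\{e\},P,Q,R\}$ is precisely $K_{1,3}$, with $\{e\}$ as the center. By Lemma \ref{lg}, the claw $K_{1,3}$ is a forbidden induced subgraph of any line graph, so $\Gamma(G)$ cannot be a line graph. There is no genuine obstacle here; the only point requiring a moment's care is the verification that $P,Q,R$ are mutually non-adjacent, but this is immediate from the covering definition of adjacency together with Lagrange's theorem.
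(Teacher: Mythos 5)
Your proposal is correct and follows essentially the same route as the paper: Cauchy's theorem produces cyclic subgroups of the three distinct prime orders, and together with the trivial subgroup they induce a claw $K_{1,3}$ (the graph $\Gamma_1$ of Figure \ref{figure 1}), which is forbidden in a line graph by Lemma \ref{lg}. Your explicit verification of the adjacencies and non-adjacencies via Lagrange's theorem is a welcome bit of extra care that the paper leaves implicit.
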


\begin{proof}
    Let $p_1, p_2$ and $p_3$ be the three distinct which primes divide the order of the group $G$. Then by Cauchy's theorem  $G$ has elements $a,b$ and $c$ of order $p_1, p_2$ and $p_3$ respectively.\\ The subgroups $\langle a \rangle, \langle b \rangle$ and $\langle c \rangle$ generated by these elements are cyclic subgroups of $G$ of order $p_1, p_2$ and $p_3$ respectively. The subgraph of $\Gamma$ induced by the set $\{e, \langle a \rangle, \langle b \rangle, \langle c \rangle\}$ is isomorphic to $\Gamma_1$ of Figure \ref{figure 1}. By Lemma \ref{lg}, $\Gamma$ is not a line graph. 
\end{proof}

By Theorem \ref{thm distinct}, we will focus on the group $G$ such that the order $\mid G \mid$ of the group $G$ is $p^\alpha q^\beta$, where $p$ and $q$ are distinct primes and $\alpha, \beta \geq 0$.

\begin{theorem}\label{lcg}
    Let $G$ be a finite cyclic group of order $p^\alpha q^\beta$, where $p$ and $q$ are distinct primes and $\alpha, \beta \geq 0$. Then the cyclic subgroup graph $\Gamma$ is line graph if and only if $G$ is cyclic group of prime power order or $G\cong \mathbb{Z}_{pq}$.
\end{theorem}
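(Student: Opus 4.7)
The plan is to exploit the fact that in a cyclic group every subgroup is cyclic. Consequently, if $G \cong \mathbb{Z}_{p^\alpha q^\beta}$, the vertices of $\Gamma(G)$ correspond bijectively to the divisors of $|G|$, and two vertices are adjacent exactly when the ratio of the corresponding divisors is $p$ or $q$. Thus $\Gamma(G)$ is isomorphic to the $(\alpha+1) \times (\beta+1)$ grid graph, which makes all the required induced-subgraph computations completely transparent.

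For the ``if'' direction I would handle two cases. If $\alpha = 0$ or $\beta = 0$, then $G$ has prime power order and $\Gamma(G)$ is isomorphic to the path on $\alpha+\beta+1$ vertices; since $P_n \cong L(P_{n+1})$, this is a line graph. If $\alpha = \beta = 1$, then the four cyclic subgroups $\{e\}, H_p, H_q, G$ (with $H_p, H_q$ the unique subgroups of orders $p$ and $q$) form the $4$-cycle $C_4$, and $C_4 \cong L(C_4)$ is again a line graph.

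For the ``only if'' direction I would argue by contradiction. Assume $\Gamma(G)$ is a line graph while $G$ is neither of prime power order nor isomorphic to $\mathbb{Z}_{pq}$. Up to swapping $p$ and $q$ I may take $\alpha \ge 2$ and $\beta \ge 1$. Let $H_p, H_{p^2}, H_{pq}$ denote the unique cyclic subgroups of orders $p, p^2, pq$, and consider the four-element set $\{\{e\}, H_{p^2}, H_{pq}, H_p\}$. The vertex $H_p$ is adjacent to each of the other three: $H_p$ covers $\{e\}$ with prime index $p$, while $H_{p^2}$ and $H_{pq}$ cover $H_p$ with prime indices $p$ and $q$ respectively, so in each case no cyclic subgroup sits strictly between. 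On the other hand, no two of $\{e\}, H_{p^2}, H_{pq}$ are adjacent: between $\{e\}$ and either of the larger two sits $H_p$ itself, while $H_{p^2}$ and $H_{pq}$ are incomparable in the subgroup lattice of $G$. Hence the induced subgraph on these four vertices is the claw $K_{1,3} = \Gamma_1$ of Figure \ref{figure 1}, and Lemma \ref{lg} yields the desired contradiction.

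The main obstacle is really only the forbidden-subgraph bookkeeping: one must locate a center vertex (here $H_p$) whose neighborhood in $\Gamma(G)$ contains three pairwise non-adjacent vertices, and the divisor lattice of $p^\alpha q^\beta$ supplies exactly such a configuration whenever one of $\alpha, \beta$ is at least $2$. Everything else reduces to the direct description of $\Gamma(\mathbb{Z}_{p^\alpha q^\beta})$ as a grid, so no further case analysis is needed.
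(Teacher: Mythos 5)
Your proof is correct and follows essentially the same route as the paper: the ``only if'' direction rests on exactly the same claw $K_{1,3}$ induced on $\{\{e\}, H_p, H_{p^2}, H_{pq}\}$ with center $H_p$, excluded by Lemma \ref{lg}. Your ``if'' direction is actually more explicit than the paper's (which just says ``one can check''), since you identify $\Gamma(\mathbb{Z}_{p^\alpha})$ as a path and $\Gamma(\mathbb{Z}_{pq})$ as $C_4$ and exhibit each as a line graph.
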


\begin{proof}

If either $G$ is cyclic group of prime power order or $G \cong \mathbb{Z}_{pq}$, then one can check that the cyclic subgroup graph $\Gamma$ of the group $G$ is a line graph. \\

For the converse, we will consider the different cases depending on the order of the $G$. First, suppose that $|G|=p^\alpha q^\beta$, where $\alpha>1$ and $\beta \geq 1$. Then $G$ contains three subgroups $K_1$, $K_2$ and $K_3$ of order $p$, $p^2$ and $pq$ respectively. One can note that the subgraph of $\Gamma$ induced by the set $\{\{e\},K_1,K_2,K_3\}$ is isomorphic to $\Gamma_1$ of Figure \ref{figure 1}. Therefore, by Lemma \ref{lg}, $\Gamma$ is not a line graph. Similarly, if $|G|=p^\alpha q^\beta$, where $\alpha\geq 1$ and $\beta > 1$, then $\Gamma$ is not a line graph. 

\vspace{0.2 cm}

Therefore, we are left with the two cases. The group $G$ is either the cyclic group of prime power order or the cyclic group of order $pq$. In both the cases, the cyclic subgroup graph $\Gamma$ is a line graph. 
\end{proof}

\begin{theorem} \label{thm 1}
    Let $G$ be a finite non-cyclic abelian group of order $p^\alpha q^\beta$, where $p$ and $q$ are distinct primes and $\alpha, \beta \geq 0$. Then $\Gamma$ is not a line graph.  
\end{theorem}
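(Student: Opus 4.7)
The plan is to exhibit the claw $K_{1,3}$ (which is $\Gamma_1$ of Figure \ref{figure 1}) as an induced subgraph of $\Gamma(G)$ and then invoke Lemma \ref{lg}. The center of the claw will be the trivial subgroup $\{e\}$, and its three leaves will be three distinct subgroups of $G$ of prime order.

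First, I would record two elementary observations about adjacency in $\Gamma(G)$. A cyclic subgroup $H$ of prime order is always adjacent to $\{e\}$, because there is no proper non-trivial cyclic subgroup strictly between $\{e\}$ and $H$. On the other hand, any two distinct prime-order subgroups $H, H'$ of $G$ are non-adjacent in $\Gamma(G)$: if $|H| = |H'|$ then neither contains the other, and if $|H| \neq |H'|$ then they correspond to different primes and again neither contains the other. Consequently, as soon as $G$ possesses three distinct subgroups of prime order, those three subgroups together with $\{e\}$ induce a copy of $K_{1,3}$.

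The remaining task is to produce three such subgroups. Writing $G = P \times Q$ where $P$ and $Q$ are the Sylow $p$- and $q$-subgroups (possibly trivial), I would observe that since the direct product of two cyclic groups of coprime orders is cyclic, the non-cyclicity of $G$ forces at least one of $P$ or $Q$ to be non-cyclic; say $P$ is non-cyclic. By the fundamental theorem of finite abelian groups, $P$ then contains a subgroup isomorphic to $\mathbb{Z}_p \times \mathbb{Z}_p$, which has exactly $p+1 \geq 3$ subgroups of order $p$. Hence $G$ has at least three distinct subgroups of order $p$.

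Combining these two steps, the set consisting of $\{e\}$ together with three distinct subgroups of order $p$ induces a copy of $\Gamma_1$ in $\Gamma(G)$, so by Lemma \ref{lg} the graph $\Gamma(G)$ is not a line graph. There is no real obstacle here: the argument reduces to finding the claw in Figure \ref{figure 1}, and the only delicate point is verifying that at least one Sylow factor of $G$ is non-cyclic, which is forced by the hypothesis that $G$ itself is non-cyclic.
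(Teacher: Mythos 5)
Your proof is correct and follows essentially the same route as the paper: both arguments locate a subgroup isomorphic to $\mathbb{Z}_t\times\mathbb{Z}_t$ inside the non-cyclic abelian group, extract three distinct subgroups of order $t$, and observe that together with $\{e\}$ they induce the claw $\Gamma_1$, contradicting Lemma \ref{lg}. Your version merely spells out the adjacency checks and the reduction to a non-cyclic Sylow factor in more detail than the paper does.
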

\begin{proof}

Since $G$ is a non-cyclic abelian group, $G$ contains a subgroup $L$ isomorphic to $\mathbb{Z}_t \times \mathbb{Z}_t$ for some prime $t\in \{p,q\}$. The subgroup $L$ of $G$ contains three distinct subgroups $K_1, K_2$ and $K_3$ of order $t$. The induced subgraph $\{\{e\}, K_1, K_2, K_3\}$ is isomorphic to $\Gamma_1$ of Figure \ref{figure 1}. Hence, by Lemma \ref{lg}, $\Gamma$ is not a line graph.
\end{proof}

\begin{proposition} \label{prop 1}
    Let $G$ be a non-abelian group of order $pq$, where $p$ and $q$ are distinct primes. Then the cyclic subgroup graph $\Gamma$ of the group $G$ is not a line graph.
\end{proposition}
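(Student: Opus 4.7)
The plan is to exhibit an induced $K_{1,3}$ (the graph $\Gamma_1$ of Figure \ref{figure 1}) inside $\Gamma(G)$, using three distinct Sylow $p$-subgroups together with the trivial subgroup. This reuses exactly the template that made the proofs of Theorems \ref{thm distinct} and \ref{thm 1} work, so the task reduces to verifying that a non-abelian group of order $pq$ actually has at least three subgroups of order $p$ for some prime $p$.

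First I would set up notation: assume without loss of generality that $p < q$. Since $G$ is non-abelian of order $pq$, a standard consequence of Sylow's theorems forces $p \mid q-1$; in particular $q \geq 3$. Next I would count Sylow $p$-subgroups. By Sylow, $n_p \mid q$ and $n_p \equiv 1 \pmod p$. If $n_p = 1$ then both Sylow subgroups are normal and $G$ would be the internal direct product of two cyclic groups of coprime prime order, hence cyclic and abelian — contradicting non-abelianness. Therefore $n_p = q \geq 3$, and $G$ contains three distinct cyclic subgroups $P_1, P_2, P_3$ of order $p$.

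Then I would examine the induced subgraph on $\{\{e\}, P_1, P_2, P_3\}$. Each $P_i$ has prime order $p$, so no cyclic subgroup lies strictly between $\{e\}$ and $P_i$; hence $\{e\}$ is adjacent to each $P_i$ in $\Gamma$. On the other hand, any two distinct $P_i, P_j$ have the same prime order, so neither contains the other, and they are not adjacent in $\Gamma$. The induced subgraph is therefore the claw $K_{1,3}$, which is exactly $\Gamma_1$ of Figure \ref{figure 1}. Applying Lemma \ref{lg} then gives that $\Gamma$ is not a line graph.

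There is no real obstacle here; the only place where care is required is the step ensuring $n_p \geq 3$, which depends on knowing that $q \geq 3$ in the non-abelian case. This is why the argument does not need a separate sub-case analysis: the congruence $p \mid q-1$ already forbids the degenerate possibility $q = 2$.
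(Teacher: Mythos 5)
Your proof is correct and follows essentially the same route as the paper: three distinct Sylow $p$-subgroups (for the smaller prime $p$) together with the trivial subgroup induce the claw $\Gamma_1$, so Lemma \ref{lg} applies. You actually supply more detail than the paper does — the Sylow count $n_p = q \geq 3$ via $p \mid q-1$ is simply asserted there — so your write-up is, if anything, the more complete version of the same argument.
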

\begin{proof}
    Assume that $p<q$. Since $G$ is non-abelian, there exist at least three distinct Sylow $p$-subgroups of $G$. Then these three Sylow $p$-subgroups together with the trivial subgroups will make the structure $\Gamma_1$ of the Figure \ref{figure 1}. Therefore, by Lemma \ref{lg}, $\Gamma$ is not a line graph.
\end{proof}

\begin{theorem}
    Let $G$ be a group of order $p^\alpha q^\beta$, where $p$ and $q$ are distinct primes and $\alpha, \beta \geq 0$. Then the cyclic subgroup graph $\Gamma$ of the group $G$ is a line graph if and only if $G$ is either a cyclic group of prime power order or a cyclic group of order $pq$.
\end{theorem}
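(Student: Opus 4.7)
The plan is to combine the earlier results with one new argument covering the only subcase they do not address. The ``if'' direction is immediate from Theorem~\ref{lcg}: cyclic groups of prime-power order and $\mathbb{Z}_{pq}$ already have cyclic subgroup graphs that are line graphs. For the ``only if'' direction I will argue by contrapositive and show that $\Gamma$ contains the claw $\Gamma_1$ of Figure~\ref{figure 1} as an induced subgraph whenever $G$ is neither cyclic of prime-power order nor $\mathbb{Z}_{pq}$. Three subcases are already disposed of by the results of this section: the cyclic case by Theorem~\ref{lcg}, the non-cyclic abelian case by Theorem~\ref{thm 1}, and the non-abelian order-$pq$ case by Proposition~\ref{prop 1}. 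This leaves the genuinely new subcase: $G$ non-abelian with $|G|=p^\alpha q^\beta$ and $\alpha+\beta\geq 3$.

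For this case my plan is to split on the numbers $n_p,n_q$ of subgroups of $G$ of the respective prime orders, using the congruence $n_r\equiv 1\pmod{r}$, which upgrades any $n_r\geq 2$ automatically to $n_r\geq r+1\geq 3$. If some $n_r\geq 2$, then $G$ contains at least three pairwise incomparable cyclic subgroups of prime order --- three of order $r$, or two of order $r$ together with one of the other prime order supplied by Cauchy's theorem --- and these three together with $\{e\}$ span a copy of $\Gamma_1$ centred at $\{e\}$.

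The subcase left over is the one where every prime dividing $|G|$ admits a \emph{unique} subgroup of that order. Let $K_1$ be the unique subgroup of order $p$; it is characteristic and hence normal in $G$. If both primes divide $|G|$ then similarly the unique subgroup $K_2$ of order $q$ is normal, and $K_1K_2\cong K_1\times K_2$ is a cyclic subgroup $K_3$ of order $pq$. Without loss of generality $\alpha\geq 2$, so a Sylow $p$-subgroup has order at least $p^2$; a $p$-group with a unique subgroup of order $p$ is either cyclic or (only when $p=2$) generalised quaternion, and in either case contains a cyclic subgroup $M$ of order $p^2$ through $K_1$. I will then verify that $K_1$ is adjacent in $\Gamma$ to each of $\{e\},M,K_3$, while $\{e\},M,K_3$ are pairwise non-adjacent: the chains $\{e\}<K_1<M$ and $\{e\}<K_1<K_3$ block two edges, and the orders $p^2,pq$ are incomparable under divisibility, so $M$ and $K_3$ are themselves incomparable. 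Thus $\{\{e\},K_1,M,K_3\}$ induces $\Gamma_1$. If instead $G$ is a non-abelian $p$-group, then $n_p=1$ forces $G\cong Q_{2^n}$ for some $n\geq 3$, and I replace $K_3$ by any second cyclic subgroup of order $4$ through $K_1$ (such subgroups are plentiful in $Q_{2^n}$) to reach the same conclusion.

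The main obstacle is this unique-prime-subgroup subcase: without a second subgroup of any given prime order the easy claw at $\{e\}$ is unavailable, and one is forced to centre the claw at $K_1$ and manufacture two incomparable cyclic covers of $K_1$. Invoking the classical classification of $p$-groups with a unique subgroup of order $p$ (cyclic, or generalised quaternion for $p=2$) is the key ingredient that makes the covers $M$ and $K_3$ available and lets the argument close.
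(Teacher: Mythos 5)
Your proof is correct, but it takes a genuinely different route from the paper's. The paper argues structurally: it first forces both Sylow subgroups to be cyclic (citing Kulakoff's theorem for non-cyclic $p$-groups), then forces both to be normal via a lengthy argument that splits on whether the Sylow $p$-subgroups intersect trivially, invoking Burnside's $p^\alpha q^\beta$ solvability theorem, a structure theorem for groups generated by elements of prime order, and an iterative descent to a subgroup isomorphic to $\mathbb{Z}_t\times\mathbb{Z}_t$; it concludes that $G$ is abelian and falls back on Theorems \ref{lcg} and \ref{thm 1}. You instead count subgroups of prime order directly: the congruence $n_r\equiv 1\pmod r$ immediately yields three incomparable atoms and a claw at $\{e\}$ unless every prime has a unique subgroup of that order, and in that residual case you centre the claw at the unique minimal subgroup $K_1$ using two incomparable cyclic covers ($M$ of order $p^2$ and $K_3$ of order $pq$, or two order-$4$ subgroups of a generalised quaternion group). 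Your version is shorter and avoids all of the heavy machinery; more importantly, it explicitly handles the generalised quaternion groups, which are precisely the non-cyclic (indeed non-abelian) $p$-groups with a \emph{unique} subgroup of order $p$ --- the case where the paper's repeated assertion that a non-cyclic or non-abelian $p$-group has three subgroups of order $p$ breaks down (e.g.\ $Q_8$, whose cyclic subgroup graph is $K_{1,4}$ and hence not a line graph, but not for the reason the paper gives). The only thing the paper's route buys in exchange is the intermediate structural conclusion that both Sylow subgroups are cyclic and normal, which your argument bypasses.
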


\begin{proof}
    Let $G$ be a cyclic group of prime power order or a cyclic group of order $pq$, then by Theorem \ref{lcg}, $\Gamma$ is a line graph.\\
    Conversely, suppose $G$ is non-trivial group of order $p^\alpha q^\beta$, where $p$ and $q$ are distinct primes and $\alpha, \beta \geq 0$ such that $\Gamma$ is a line graph. First, suppose that $\alpha >0$ and $\beta>0$. Let $P$ and $Q$ be a Sylow $p$-subgroup and Sylow $q$-subgroup of $G$ respectively. Now, suppose that one of $P$ and $Q$ (say $P$) is non-cyclic. Then by \cite{9}, there exists at least three distinct subgroups of $P$ (hence of $G$) of order $p$. Then these three subgroups together with the trivial subgroup forms the structure $\Gamma_1$ of the Figure \ref{figure 1}. Therefore, by Lemma \ref{lg}, $\Gamma$ is not a line graph. Hence both $P$ and $Q$ are cyclic. \\
    Now, suppose that one of $P$ and $Q$ (say $P$) is not normal in $G$. Then there are at least two (indeed three) distinct Sylow $p$-subgroups $P_1$ and $P_2$ of $G$. If the intersection of all Sylow $p$-subgroups are trivial, then we can choose two distinct elements $a$ and $b$ of order $p$ from $P_1$ and $P_2$ respectively. Let $L$ be the subgroup of $G$ generated by $a$ and $b$. Since the order of $G$ is $p^\alpha q^\beta$, by Burnside Theorem $G$ is solvable. Consequently, $L$ is solvable. Therefore, by \cite[Corollary 2.3, p.319]{10}, $\frac{L}{[L,L]}$ is elementary abelian $p$-group and the canonical homomorphism $\eta: L \longrightarrow \frac{L}{[L,L]}$ splits, where $[L,L]$ is the commutator subgroup of $L$.\\
    Assume that $L$ is a $p$-group. If $L$ is abelian, then $L$ contains a subgroup isomorphic to $\mathbb{Z}_p \times \mathbb{Z}_p$, as $a$ and $b$ are from the distinct Sylow $p$-subgroups of $G$. By Theorems \ref{lcg} and \ref{thm 1}, $\Gamma$ is not a line graph.\\
    If $L$ is not-abelian, then by \cite{9}, there exist three distinct subgroups of order $p$. These subgroups together with trivial subgroup forms the structure of $\Gamma_1$ of Figure \ref{figure 1}. Hence, $\Gamma$ is not a line graph.\\
    This shows that $L$ is not a $p$-group. Let a prime $r (\neq p)$ divide the order of $L$. Then there exists an element $c$ in $L$ of order $r$. If the subgroup $\langle c \rangle$ of $L$ is unique in $L$, then it is normal in $L$. Therefore, we get a non-abelian subgroup of $L$ (hence of $G$) of order $pr$. By Proposition \ref{prop 1}, $\Gamma$ is not a line graph. This implies that there exists two distinct subgroups $\langle c \rangle$ and $\langle c' \rangle$ of $L$ of order $r$.\\
    Let $L'$ be the subgroup of $L$ generated by $c$ and $c'.$ Again, by \cite[Corollary 2.3, p.319]{10}, $\frac{L'}{[L',L']}$ is elementary abelian $r$-group. Since the prime $r$ is different from the prime $p$, $L'$ is the proper subgroup if $L$. We apply the similar argument on $L'$ unless we get a subgroup isomorphic to $\mathbb{Z}_t \times \mathbb{Z}_t$, where $t$ is some prime. This process will terminate as a natural number has only finitely many prime divisors. Therefore, by Theorems \ref{lcg} and \ref{thm 1}, $\Gamma$ is not a line graph.\\
    Now, let the intersection of all Sylow $p$-subgroups be $<d>$ for some non-trivial element $d\in G$. Note that $<d>$ is the characteristic subgroups of $G$. Let the order of $d$ be $p^\gamma$, where $0<\gamma \leq \alpha$. Since there are at least three distinct Sylow $p$-subgroups of $G$, we can choose three distinct cyclic subgroups $K_1$, $K_2$ and $K_3$ of order $p^{\gamma+1}$ containing $<d>$  from distinct three Sylow $p$-subgroups of $G$. Then the subgraph of $\Gamma$ induced by the set $\{<d>,K_1,K_2,K_3\}$ is isomorphic to $\Gamma_1$ of Figure \ref{figure 1}. Hence $\Gamma$ is not a line graph. This implies that both $P$ and $Q$ are normal in $G$. Therefore, $G$ is abelian group. By Theorems \ref{lcg} and \ref{thm 1}, $G$ is a line graph if and only if $G$ is cyclic group of prime power order or a cyclic group of order $pq$.\\
    Next, assume that one of $\alpha$ or $\beta$ is zero. Let $\beta = 0$. Then $G$ is $p$-group. If $G$ is not abelian, then  by  \cite{9}, there exists three subgroups of order $p$. These subgroups together with trivial subgroup forms $\Gamma_1$ of Figure \ref{figure 1}. Hence, $\Gamma$ is not a line graph. Therefore, $G$ is abelian. Hence by Theorems \ref{lcg} and \ref{thm 1}, $G$ is a line graph if and only if $G$ is cyclic group of prime power order.
\end{proof}

	\noindent \textbf{Acknowledgment}: The first author is supported by junior research fellowship of CSIR, India.\\
	

	
	\small
    
    \end{document}